\newtheorem{thm}{Theorem}[section] 
\newtheorem{lemma}[thm]{Lemma}
\newtheorem{prop}[thm]{Proposition}
\newtheorem{cor}[thm]{Corollary}
\theoremstyle{definition}
\newtheorem{remark}[thm]{Remark}
\newtheorem{defn}[thm]{Definition}
\newcommand{\cO}{{\mathcal O}}
\newcommand{\ignore}[1]{}
\def\F{{\mathbb F}}
\def\qed{\hfill\vbox{\hrule\hbox{\vrule\kern3pt\vbox{\kern6pt}\kern3pt\vrule}\hrule}\bigskip}
\def\qed{\hfill\vbox{\hrule\hbox{\vrule\kern3pt\vbox{\kern6pt}\kern3pt\vrule}\hrule}\bigskip}
\theoremstyle{plain}
\theoremstyle{definition}
\begin{document}

\title{Formal properties in small codimension}


\author{Jorge Caravantes}
\address{Departamento de \'Algebra, Facultad de Matem\'aticas, Plaza de Ciencias, Universidad Complutense de Madrid. 28040 Madrid, Spain}
\email{jcaravan@mat.ucm.es}
\thanks{The first author was partially supported by the Spanish Ministerio de Economíay Competitividad and by the European Regional Development Fund (ERDF), under the project MTM2011-25816-C02-(02).}

\author{Nicolas Perrin}
\address{Mathematisches Institut, Heinrich-Heine-Universit{\"a}t,
D-40204 D{\"u}sseldorf, Germany}
\email{perrin@math.uni-duesseldorf.de}

\subjclass[2000]{14M07,14M17,14B10}


\begin{abstract}
In this note we extend connectedness results to formal properties
of inverse images under proper maps of Schubert varieties and of the diagonal in products of projective rational homogeneous spaces.
\end{abstract}

\maketitle

\markboth{J. CARAVANTES \& N.~PERRIN}{Formal properties in small
  codimension}

\section*{Introduction}\label{sec:intro}

That small codimension subvarieties $Y$ in a \emph{sufficently
  positive} variety $X$ inherit some of the properties of $X$ is a
well know phenomenon. In this note we study the formal properties of
small codimension 
subvarieties in projective rational homogeneous spaces. Indeed, many
of the geometric properties of a subvariety $Y$ in $X$ can be encoded
in the \emph{algebraic tubular neighbourhood} $X_Y$ of $Y$ in $X$. In
particular if $K(X_Y)$ is the ring of formal functions of $X$
along $Y$ (see Definition \ref{KXY}), then, for $X$ normal, the
subvariety $Y$ is connected if and only if $K(X_Y)$ is a field. If
furthermore $K(X_Y)$ is isomorphic to the field $K(X)$ of rational
functions, then $Y$ is called G3 in $X$. In this note we prove the G3
property for some small codimension subvarieties in projective
rational homogeneous spaces.

More precisely, let $G$ be a reductive group, let $P$ be a parabolic
subgroup and let $X = G/P$. Denote by $X^P(w)$ the Schubert
varieties in $X$ (see below for more
details). We define (see Definition \ref{admis}) admissible Schubert
subvarieties $X^P(v)$ in $X^P(w)$ and prove 

\begin{thm}\label{main0}
  Let $f:Y \to X$ be a proper morphism and let $X^P(v)$ be admissible
  in $X^P(w)$ with $[f(Y)] \cdot [X^P(v)] \ne
  0$. Then $f^{-1}(X^P(w))$ is G3 in $Y$.
\end{thm}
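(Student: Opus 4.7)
My plan is to reduce the G3 assertion for $Z := f^{-1}(X^P(w))$ in $Y$ to a connectedness statement for a family of intersections parametrized by the group $G$, and then to extract that connectedness from the admissibility hypothesis together with the non-vanishing condition $[f(Y)] \cdot [X^P(v)] \ne 0$.

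First, I would invoke a Faltings-type criterion for the G3 property. Such a criterion typically asserts that if $Z \subset Y$ admits a covering family of deformations $\{Z_t\}_{t \in T}$ with $Z_{t_0} = Z$ such that for generic $t$ the intersection $Z \cap Z_t$ is non-empty and connected, then $Z$ is G3 in $Y$. In the present setting the natural parameter space is $G$ (or a suitable subgroup of it) and the natural family is $Z_g = f^{-1}(g\cdot X^P(w))$. Since $f$ is proper, $Z \cap Z_g = f^{-1}\bigl(X^P(w) \cap g\cdot X^P(w)\bigr)$, so the question reduces to a connectedness statement for the preimage under $f$ of an intersection of two Schubert varieties.

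Next, I would exploit the admissibility of $X^P(v)$ in $X^P(w)$. The definition (Definition \ref{admis}) should guarantee that for a generic translate $g$, the intersection $X^P(w) \cap g\cdot X^P(w)$ contains a translate $g'\cdot X^P(v)$, or more cohomologically, that $[X^P(v)]$ appears with positive coefficient in the class of $X^P(w)\cap g\cdot X^P(w)$. Combined with $[f(Y)]\cdot [X^P(v)] \ne 0$ and Kleiman's transversality (in its $G$-equivariant form on $G/P$), this ensures that for generic $g$ the intersection $f(Y) \cap X^P(w) \cap g\cdot X^P(w)$ is non-empty, hence $Z \cap Z_g \ne \emptyset$. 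Non-emptiness may then be promoted to connectedness by invoking the connectedness theorem for preimages of Schubert varieties established earlier in the paper, applied to $f$ restricted to $Z \cap Z_g$ and to the relevant Schubert sub-locus inside $X^P(w) \cap g\cdot X^P(w)$. This verifies the hypothesis of the G3 criterion and completes the argument.

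The main obstacle I anticipate is the third step: extracting from the purely combinatorial definition of admissibility the geometric statement needed for the G3 criterion, namely that generic $G$-translates of $X^P(w)$ meet $X^P(w)$ in a locus that still ``sees'' the class $[X^P(v)]$ and still meets $f(Y)$. It may be necessary to restrict the deformation from $G$ to a Levi or unipotent subgroup reflecting the relative position of $v$ and $w$, and to track carefully how $[X^P(v)]$ occurs in a Littlewood--Richardson style expansion of $[X^P(w)] \cdot [g\cdot X^P(w)]$. A secondary technical point is ensuring that the connectedness theorem available to us applies directly to the intersection $X^P(w) \cap g\cdot X^P(w)$, which is typically not itself a Schubert variety; this may force an inductive reformulation in which admissibility is propagated to the components of this intersection.
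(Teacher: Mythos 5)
Your strategy diverges from the paper's, and it rests on two steps that are not available. First, the ``Faltings-type criterion'' you invoke --- a covering family $\{Z_t\}$ with $Z\cap Z_t$ non-empty and connected for generic $t$ forcing $Z$ to be G3 --- is not among the tools established here (the available tools are Propositions \ref{HiroMatsu} and \ref{Badescu} and the generating-subvariety criterion), and I know of no reference for it in the generality you need; proving it would be essentially the whole difficulty of the theorem. Second, even granting such a criterion, the verification step fails with the results at hand: Proposition \ref{Perrin} gives connectedness of $f^{-1}$ of a \emph{single} Schubert variety under an admissibility hypothesis, whereas you would need connectedness of $f^{-1}\bigl(X^P(w)\cap g\cdot X^P(w)\bigr)$; the intersection $X^P(w)\cap g\cdot X^P(w)$ is in general neither irreducible nor a translate of a Schubert variety, and admissibility of $X^P(v)$ in $X^P(w)$ (which concerns how $S^P(w)$-translates of $X^P(v)$ sweep out $X^P(w)$, together with the disjointness $\Sigma^P(w)\cap\Sigma^P(v)=\emptyset$) says nothing about the self-intersection pattern of $X^P(w)$. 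Your proposal also never uses the stabiliser $S^P(v)$, which is where the hypothesis $[f(Y)]\cdot[X^P(v)]\neq 0$ actually does its work.

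For comparison, the paper's proof of Theorem \ref{main1} moves the translates of $X^P(v)$, not those of $X^P(w)$: it forms the incidence variety $Z=\{(y,\overline h)\in Y\times G/Q\mid f(y)\in h(X^P(v))\}$ with $Q=S^P(v)$, notes that $q:Z\to G/Q$ is proper and surjective precisely because $[f(Y)]\cdot[X^P(v)]\neq 0$, pulls back a minimal generating (hence G3, by Lemma \ref{lemma-gen}) Schubert variety $X^Q(t)\subset G/Q$ to obtain a G3 subvariety $q^{-1}(X^Q(t))$ of $Z$ via Proposition \ref{HiroMatsu}, and uses admissibility exactly once, in Lemma \ref{lemita}, to show $q^{-1}(X^Q(t))\subset p^{-1}(f^{-1}(X^P(w)))$. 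Proposition \ref{Perrin} then guarantees that $K(Z_{p^{-1}(W)})$ is a field for $W=f^{-1}(X^P(w))$, and two applications of Proposition \ref{Badescu} transfer the G3 property up to $p^{-1}(W)$ inside $Z$ and then down to $W$ inside $Y$. If you want to salvage your approach, the parameter space should be $G/Q$ rather than $G$, and the G3 input should come from a generating subvariety of $G/Q$ rather than from a self-intersection criterion for $X^P(w)$.
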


Applying this result we obtain formal properties of the inverse image
of the diagonal in partial flag varieties: let $X = \F((a_i)_{i \in [1,r]}
; V)$ be a variety of partial flags in $V$ (see Section
\ref{section-flag}) and let $H$ be a hyperplane of $V$. Write
$\Delta_H$ for the diagonal in $\F((a_i)_{i \in [1,r]} ; H) \times
\F((a_i)_{i \in [1,r]} ; H)$ and $\Delta$ for the diagonal in $X
\times X$. 

\begin{cor}
Let $f:Y\to X\times X$ be a proper morphism with $[f(Y)] \cdot
[\Delta_H] \ne 0$. Then $f^{-1}(\Delta)$ is $G3$ in $Y$. 
\end{cor}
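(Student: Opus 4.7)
The plan is to reduce the corollary to Theorem \ref{main0} by realizing both $\Delta$ and $\Delta_H$ as Schubert subvarieties of a suitable projective rational homogeneous space, with $\Delta_H$ admissible in $\Delta$. The natural first attempt is to use $X\times X=(G\times G)/(P\times P)$ itself as the ambient flag variety, which is a partial flag variety for $\widetilde{G}:=G\times G$ with Borel $\widetilde{B}:=B\times B$. Here $\F((a_i)_{i\in[1,r]};H)\subset X$ is the Schubert subvariety parameterizing flags of $V$ contained in $H$, so its square $\F((a_i);H)\times\F((a_i);H)$ is Schubert in $X\times X$ and contains $\Delta_H$.

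The first obstruction is that $\Delta$ itself is not a Schubert subvariety for $\widetilde{B}$: it is the unique closed orbit of the diagonal action of $G$, and not a $(B\times B)$-orbit closure. To circumvent this, I would re-embed $X\times X$ into a larger partial flag variety, for instance $\F((a_i,n+a_i)_{i\in[1,r]};V\oplus V)$ with $n=\dim V$, via
\[
(F_\bullet,F'_\bullet)\longmapsto\bigl(F_\bullet\oplus 0\subset V\oplus F'_\bullet\bigr);
\]
the image is itself a Schubert subvariety, cut out by the incidence conditions $G_r\subset V\oplus 0$ and $V\oplus 0\subset H_1$. After applying an element of $\GL(V\oplus V)$ that carries the reference subspace $V\oplus 0$ to a subspace transverse to the diagonal (e.g.\ the diagonal copy $\Delta_V$), both $\Delta$ and $\Delta_H$ can be arranged to sit as Schubert subvarieties of the auxiliary ambient flag variety.

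Granted such a realization, Theorem \ref{main0} applies directly to the composition of $f$ with the embedding $X\times X\hookrightarrow\F((a_i,n+a_i);V\oplus V)$. The cohomological hypothesis $[f(Y)]\cdot[\Delta_H]\ne 0$ transfers to the corresponding non-vanishing condition in the larger flag variety by the projection formula, using that the embedding is a closed immersion of a Schubert subvariety; one then concludes that $f^{-1}(\Delta)$ is G3 in $Y$, as required.

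The hardest step will be verifying that $\Delta_H$ is admissible in $\Delta$ in the sense of Definition \ref{admis}. Geometrically the inclusion is natural, corresponding under the isomorphism $\Delta\simeq X$ to the Schubert inclusion $\F((a_i);H)\hookrightarrow X$, which is itself a well-behaved admissible Schubert pair. Combinatorially, however, admissibility must be checked after the doubling construction in terms of the Schubert indices inside the Weyl group of the auxiliary ambient flag variety; this requires careful tracking of the data produced by both the doubling embedding and the hyperplane condition on $V$, and is where the bulk of the technical work lies.
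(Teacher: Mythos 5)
Your diagnosis of the central difficulty is exactly right: $\Delta$ is not a $(B\times B)$-orbit closure in $X\times X$, so Theorem \ref{main0} cannot be applied there directly. But the re-embedding you propose does not remove this obstruction. The image of $X\times X$ in $\F((a_i,n+a_i)_{i\in[1,r]};V\oplus V)$ under $(F_\bullet,F'_\bullet)\mapsto(F_\bullet\oplus 0\subset V\oplus F'_\bullet)$ is the locus $\Sigma=\{G_\bullet: G_r\subset V\oplus 0\subset G_{r+1}\}$, whose stabilizer is the parabolic $\mathrm{Stab}(V\oplus 0)$ acting through its Levi quotient $\GL(V)\times\GL(V)$; consequently any Borel of $\GL(V\oplus V)$ preserving $\Sigma$ induces on it a Borel of $\GL(V)\times\GL(V)$, and the $B$-stable subvarieties of $\Sigma$ are exactly the products of Schubert varieties of the two factors --- the same Schubert varieties you started with. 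For $\Delta$ itself to become a Schubert variety its full stabilizer in $\GL(V\oplus V)$ would have to contain a Borel subgroup, and it does not: already for $X=\P^1=\F(1;k^2)$ the image of $\Delta$ in $\F(1,3;k^4)$ is a curve along which both $G_1$ and $G_3$ vary (whereas every one-dimensional Schubert variety of $\F(1,3;k^4)$ keeps one of the two subspaces fixed), and its stabilizer $\left\{\left(\begin{smallmatrix}A&B\\0&\lambda A\end{smallmatrix}\right)\right\}$ has dimension $9<10=\dim B_{\GL_4}$. Translating by an element of $\GL(V\oplus V)$ only conjugates the Borel, so it cannot help. The underlying reason is that $\Delta$ is homogeneous under the diagonal $\GL(V)$, which lies in no proper parabolic of $\GL(V)\times\GL(V)$; the ``hardest step'' you defer --- verifying admissibility of $\Delta_H$ in $\Delta$ in the sense of Definition \ref{admis} --- never arises, because the pair never becomes a pair of Schubert varieties.

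The paper resolves this by mapping \emph{onto} $X\times X$ rather than embedding it. The two projections $W=V\oplus V\to V$ induce a dominant rational map $p:\F((a_i)_{i\in[1,r]};W)\to X\times X$ with fiber $\prod_i\GL(a_i-a_{i-1})$ (Lemma \ref{DTrick}), and under $p$ the diagonal $\Delta$ is the image of $\tilde\Delta=\F((a_i)_{i\in[1,r]};\Delta_W)$, where $\Delta_W\subset W$ is the diagonal copy of $V$. This $\tilde\Delta$ \emph{is} a Schubert variety of $\F((a_i)_{i\in[1,r]};W)$ (flags contained in a fixed subspace), and $\F((a_i)_{i\in[1,r]};\tilde H)$ is admissible in it for a hyperplane $\tilde H\subset\Delta_W$, with image $\Delta_H$. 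One then forms the fiber product $\tilde Y=Y\times_{X\times X}\F((a_i)_{i\in[1,r]};W)$, applies Theorem \ref{main1} upstairs to conclude that $\tilde f^{-1}(\tilde\Delta)$ is G3 in $\tilde Y$, and descends to $Y$ using Propositions \ref{connectfield} and \ref{Badescu}. If you want to salvage your write-up, replace the embedding step by this quotient construction; the rest of your outline (reduction to Theorem \ref{main0}/\ref{main1}, transfer of the non-vanishing hypothesis) then goes through.
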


\subsection*{Notation}
\label{notation}


We work over an algebraically closed field $k$ of characteristic
0. Let $G$ be a reductive 
group over $k$ and let $T$ be a maximal torus of $G$. Let $B$ a Borel 
subgroup containing $T$ and $P$ a parabolic subgroup containing
$B$. Denote by $W$ and $W_P$ the Weyl groups of $G$ and $P$ and by
$W^P$ the set of minimal length representatives of $W/W_P$. The
$B$-orbit closures in $X$ are called Schubert varieties and are
indexed by elements $w \in W^P$. We write $X^P(w)$ for the
corresponding Schubert variety. Given $\alpha$ in the root system of
$(G,T)$, denote by $U(\alpha)$ the corresponding unipotent subgroup
and by $s_\alpha$ the corresponding reflection in $W$. 

We write $\Sigma(P)$ for the set of simple roots $\alpha$ such that
$U(-\alpha)$ is not contained in $P$. Define $S^P(w)$ as the
stabiliser in $G$ of $X^P(w)$. Then $S^P(w)$ is a parabolic subgroup
of $G$ containing $B$. Set $\Sigma^P(w) = \Sigma(S^P(w))$. For more details on rational homogeneous spaces and algebraic groups, we refer to \cite{Br} and  \cite{S} respectively.

\section{Preliminaries regarding the G3 property}

\subsection{Ring of formal functions}

Let $X$ be a scheme and $Y$ be a closed subscheme defined by the sheaf
of ideals $I$. Write $Y_n = (Y,\cO_X/I^n)$ for the infinitesimal
neighbourhood of order $n$ of $Y$ in $X$. The formal completion of $X$
along $Y$ is the formal scheme $\displaystyle{X_{Y}=\lim_\to Y_n}$. 

\begin{defn}
\label{KXY}
The ring of formal rational functions $K(X_Y)$ of $X$ along $Y$ is the
sheaf associated to the presheaf $U \mapsto [\cO_{X_Y}(U)]_0$ where
$[A]_0$ denotes the total ring of fractions of $A$.  
\end{defn}

The ring $K(X_Y)$ is in general not a field. However we have the
following result. 

\begin{prop}[{see \cite[Corollary 9.10]{B1}}]
\label{connectfield}
Let $X$ be an algebraic variety and let $Y$ be a closed subvariety
$X$. Let $u:X'\to X$ be the normalization of $X$. Then $K(X_{Y})$ is a
field if and only if $u^{-1}(Y)$ is connected.
\end{prop}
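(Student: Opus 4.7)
The plan is to reduce to the case of a normal variety via the normalization $u:X'\to X$, and then translate the question about $K(X_Y)$ into a statement about the minimal primes of the ring of formal functions $\cO_{X_Y}$.

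First I would observe that, since $u$ is finite and birational and formal completion commutes with finite morphisms, the induced map $\cO_{X_Y} \hookrightarrow \cO_{X'_{u^{-1}(Y)}}$ is a finite ring extension, and moreover $X'_{u^{-1}(Y)}$ is the normalization of the formal scheme $X_Y$. This rests on Zariski's main theorem together with the fact that algebraic varieties over $k$ are excellent (so completion preserves normality and the domain property). In particular, the induced map on minimal primes of the two rings is a bijection.

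Next I would decompose $u^{-1}(Y) = Z_1 \sqcup \cdots \sqcup Z_r$ into connected components. For each $i$, since $X'$ is normal and $Z_i$ is connected, the formal version of Zariski's connectedness theorem yields that $\cO_{X'_{Z_i}}$ is a Noetherian domain; this is the substantive classical input (cf.\ Hartshorne--Hironaka--Matsumura, as compiled in \cite{B1}). Consequently
\[
\cO_{X'_{u^{-1}(Y)}} \;\cong\; \prod_{i=1}^{r} \cO_{X'_{Z_i}}
\]
has exactly $r$ minimal primes, and by the previous step so does $\cO_{X_Y}$.

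Finally, the total ring of fractions of a reduced Noetherian ring is a field if and only if the ring is a domain; by excellence $\cO_{X_Y}$ is reduced, so it is a domain precisely when $r=1$, i.e.\ when $u^{-1}(Y)$ is connected. Under this equivalence $K(X_Y)$ is a field if and only if $u^{-1}(Y)$ is connected. The main obstacle is the formal connectedness statement in the normal case, which is the true content of the proposition and hinges on a mild form of Zariski's main theorem for formal schemes; secondary care is needed to ensure that the bijection of minimal primes induced by the finite birational map $\cO_{X_Y} \hookrightarrow \cO_{X'_{u^{-1}(Y)}}$ is genuinely induced by the identification with the formal normalization.
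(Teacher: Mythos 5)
You should first note that the paper itself gives no proof of Proposition \ref{connectfield}: it is quoted verbatim from B\u adescu \cite[Corollary 9.10]{B1}, so the relevant comparison is with the proof there. Your skeleton is the same as that standard proof: pass to the normalization $u:X'\to X$, use that $u$ is finite and birational (plus excellence) to identify formal-rational functions upstairs and downstairs, split $u^{-1}(Y)$ into connected components $Z_1,\dots,Z_r$ so that the completion decomposes as a disjoint union, and feed the normal connected case to Hironaka--Matsumura \cite{HM}. You also correctly flag that last input as the real content. So the strategy is right.

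The genuine gap is in how you pass from this decomposition to the conclusion: you treat $K(X_Y)$ as the total ring of fractions of the ring of global sections $\Gamma(X_Y,\cO_{X_Y})$ and argue by counting minimal primes of that ring. By Definition \ref{KXY}, $K(X_Y)$ is the (global sections of the) \emph{sheafification} of $U\mapsto[\cO_{X_Y}(U)]_0$, and this is in general much larger than $[\Gamma(X_Y,\cO_{X_Y})]_0$: for $X=\P^2$ and $Y$ a line one has $\Gamma(X_Y,\cO_{X_Y})=k$, whereas $K(X_Y)\cong k(x,y)$ because $Y$ is G3. So ``reduced with one minimal prime $\Rightarrow$ total ring of fractions is a field'' is being applied to the wrong ring (and the Noetherianity of $\Gamma(X_Y,\cO_{X_Y})$ is unjustified). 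Relatedly, ``$\cO_{X'_{Z_i}}$ is a Noetherian domain'' is not what Hironaka--Matsumura provides: the formal completion of a normal variety along a connected subvariety is typically \emph{locally} reducible (many formal branches on each affine piece), and the whole point of the theorem is that these branches are globally interlinked so that $K(X'_{Z_i})$ is nevertheless a field. Your argument is essentially correct affine-locally, where excellence plus lifting of idempotents does give that $\cO_{X_Y}(U)$ is a normal domain when $u^{-1}(Y)\cap u^{-1}(U)$ is connected, but that does not globalize for free. The repair is to run the whole argument at the level of the sheaves $K(\cdot)$: establish $K(X_Y)\cong K(X'_{u^{-1}(Y)})\cong\prod_{i=1}^r K(X'_{Z_i})$, note each factor is a field by \cite{HM}, and observe that a product of $r$ fields is a field iff $r=1$ --- which is exactly how \cite[Corollary 9.10]{B1} proceeds.
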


There is a natural morphism $\alpha_{X,Y} : K(X) \to K(X_Y)$ (see
\cite[Page 84]{B1}). 

\begin{defn}
The subscheme $Y$ is called $G3$ in $X$ if $\alpha_{X,Y}$ is an isomorphism.
\end{defn}

We shall also use the following two results for proving the G3 property.

\begin{prop}\label{HiroMatsu}(see \cite{HM} or \cite[Corollary 9.13]{B1})
Let $f:X'\to X$ be a proper surjective morphism. Suppose $Y$ is G3 in
$X$. Then $f^{-1}(Y)$ is G3 in $X'$. 
\end{prop}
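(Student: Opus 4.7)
The plan is to deduce the G3 property for $f^{-1}(Y)$ in $X'$ from that of $Y$ in $X$ by comparing $\alpha_{X,Y}$ and $\alpha_{X',f^{-1}(Y)}$ via a naturally induced square. Since $f(f^{-1}(Y)) \subseteq Y$, the morphism $f$ induces a morphism of formal schemes $\wh f : X'_{f^{-1}(Y)} \to X_Y$, and hence a commutative diagram
\[
\begin{array}{ccc}
K(X) & \xrightarrow{\alpha_{X,Y}} & K(X_Y) \\
f^*\downarrow\;\; & & \;\;\downarrow \wh f^* \\
K(X') & \xrightarrow{\alpha_{X',f^{-1}(Y)}} & K(X'_{f^{-1}(Y)})
\end{array}
\]
in which the top arrow is an isomorphism by hypothesis. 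It suffices to prove that the bottom arrow is an isomorphism.

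First I would establish injectivity of $\alpha_{X',f^{-1}(Y)}$. Via Proposition \ref{connectfield} applied to the normalization of $X'$, this reduces to showing that the preimage of $f^{-1}(Y)$ in this normalization is connected. Connectedness of $Y$ is guaranteed by the G3 hypothesis (again via Proposition \ref{connectfield}), and a Stein factorization argument for the proper morphism $f$ (composed with the normalization) propagates connectedness back to $X'$.

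The main step is surjectivity. Given $\phi \in K(X'_{f^{-1}(Y)})$, I would apply Grothendieck's theorem on formal functions for the proper morphism $f$ to identify the formal completion of $f_* \cO_{X'}$ along $Y$ with $\wh f_* \cO_{X'_{f^{-1}(Y)}}$. Passing to total rings of fractions and combining with the G3 hypothesis $K(X) \cong K(X_Y)$, the element $\phi$ is represented by an honest rational function on $X'$, which by commutativity of the square must equal $\alpha_{X',f^{-1}(Y)}$ applied to that function. The hard part will be this surjectivity step: Grothendieck's theorem is an assertion about coherent pushforwards, whereas the G3 property is formulated in terms of total rings of fractions, and bridging these rigorously requires the careful local analysis carried out in the cited work of Hironaka-Matsumura \cite{HM} (see also \cite[Chapter 9]{B1}).
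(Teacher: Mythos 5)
First, a remark on the comparison itself: the paper gives no proof of this proposition. It is quoted from Hironaka--Matsumura \cite{HM} (see also \cite[Corollary 9.13]{B1}), so your attempt can only be measured against the classical argument in those references. Your skeleton --- the commutative square relating $\alpha_{X,Y}$ and $\alpha_{X',f^{-1}(Y)}$, then injectivity plus surjectivity of the bottom arrow --- is indeed the right shape, but both of the substantive steps have problems.

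On the ``injectivity'' step: for $X'$ an irreducible variety, injectivity of $\alpha_{X',f^{-1}(Y)}$ is essentially automatic, since $K(X')$ is a field and $f^{-1}(Y)\neq\emptyset$ by surjectivity of $f$; what your argument actually needs at this point is that $K(X'_{f^{-1}(Y)})$ is a \emph{field}, equivalently (Proposition \ref{connectfield}) that the preimage of $f^{-1}(Y)$ in the normalization of $X'$ is connected. Your proposed derivation of this --- connectedness of $Y$ plus ``a Stein factorization argument propagates connectedness back to $X'$'' --- fails as stated: in the Stein factorization $X'\to X''\to X$ the second morphism is finite surjective, and the preimage of a connected set under a finite surjective morphism is in general disconnected (a double cover already gives a counterexample). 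This is precisely where the full strength of the G3 hypothesis, and not mere connectedness of $Y$, must enter: the components of the preimage under the finite part are controlled by the decomposition of $K(X'')\otimes_{K(X)}K(X_Y)$, and only the equality $K(X_Y)=K(X)$ forces a single component.

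On the surjectivity step: you correctly name Grothendieck's theorem on formal functions as the engine, but you then explicitly defer the passage from the coherent statement about $f_*\cO_{X'}$ to the statement about total rings of fractions to \cite{HM} and \cite[Chapter 9]{B1} --- and that passage \emph{is} the content of the proposition. As written, the proposal is an accurate roadmap of the classical proof rather than a proof: the two places where real work happens (using $K(X_Y)=K(X)$ to control the finite part of the Stein factorization, and bridging the formal function theorem with $K(X'_{f^{-1}(Y)})$) are exactly the ones left to the references being proved.
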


\begin{prop}\label{Badescu}(see \cite[Proposition 9.23]{B1})
Let $f:X'\to X$ be a proper surjective morphism of irreducible
varieties and let $Y\subset X$
and $Y'\subset X'$ be closed subvarieties such that  $f(Y')\subset
Y$. Assume that the rings $K(X_{Y})$ and $K(X'_{f^{-1}(Y)})$ are
both fields. If $Y'$ is G3 in $X'$, then $Y$ is G3 in $X$. 
\end{prop}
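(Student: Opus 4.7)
The plan is to show that $\alpha_{X,Y}\colon K(X) \to K(X_Y)$ is an isomorphism. Injectivity is automatic: $K(X_Y)$ is a field by hypothesis, so any nonzero ring map from the field $K(X)$ is injective. The content is surjectivity, and my strategy has two stages: first upgrade the G3 property from $Y'$ to $f^{-1}(Y)$, then descend the G3 property along $f$ from $f^{-1}(Y)$ back to $Y$.

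For the first stage, since $f(Y') \subset Y$ gives the inclusion $Y' \subset f^{-1}(Y)$, I obtain a factorization
\[
\alpha_{X', Y'} \;=\; \rho \circ \alpha_{X', f^{-1}(Y)}\colon K(X') \longrightarrow K(X'_{f^{-1}(Y)}) \longrightarrow K(X'_{Y'}),
\]
with $\rho$ the natural restriction coming from the containment of powers of the defining ideals. All three rings are fields ($K(X'_{f^{-1}(Y)})$ by hypothesis, $K(X'_{Y'})$ because $Y'$ is G3 in the irreducible $X'$), hence both maps in the factorization are injective. Since $\alpha_{X', Y'}$ is bijective by the G3 hypothesis on $Y'$, both factors must be bijective too, and in particular $f^{-1}(Y)$ is G3 in $X'$.

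For the second stage, I set up the commutative square with top row $\alpha_{X, Y}$, bottom row the now-established isomorphism $\alpha_{X', f^{-1}(Y)}$, and vertical maps the pullbacks $f^*$. All four maps are injective field homomorphisms, so after identifying $K(X'_{f^{-1}(Y)}) \cong K(X')$ through the bottom isomorphism I obtain a chain of subfields $K(X) \subseteq K(X_Y) \subseteq K(X')$, and the proof reduces to showing the first inclusion is an equality.

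The main obstacle is this last identification: in general $K(X')/K(X)$ need not be algebraic, so an intermediate subfield is not automatically $K(X)$. I would settle it using the formal functions theorem. Via the Stein factorization $f = g \circ h$, with $h$ proper surjective with connected fibers and $g$ finite, proper base change along $h$ yields $h_* \cO_{X'_{h^{-1}(Z)}} = \cO_{\widetilde{X}_Z}$ on the relevant formal completions, pinning the image of $K(X_Y)$ inside $K(X')$ to the subfield of rational functions coming from the intermediate Stein variety; a trace argument on the finite factor $g$ then finishes the descent to $K(X)$. Since $X$ is not assumed normal, the careful point is to carry the proper base change through normalizations and then reflect the conclusion back via Proposition \ref{connectfield}.
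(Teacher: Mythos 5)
The paper does not prove this proposition: it is quoted verbatim from B\u adescu's book (Proposition 9.23), so there is no in-paper argument to compare against and your attempt has to stand on its own. Your first stage does stand: the factorization $\alpha_{X',Y'}=\rho\circ\alpha_{X',f^{-1}(Y)}$ exists because $I_{f^{-1}(Y)}\subseteq I_{Y'}$, all three rings are fields, hence all maps are injective, and bijectivity of the composite then forces bijectivity of both factors. That correctly reduces to the case $Y'=f^{-1}(Y)$ and yields the chain $K(X)\subseteq K(X_Y)\subseteq K(X')$. You also correctly identify that this is where the real content lies, since $K(X')/K(X)$ is in general transcendental.

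The second stage, however, is a plan rather than a proof, and the plan as stated has a genuine gap at both factors of the Stein factorization. For the connected-fibres factor $h\colon X'\to\widetilde X$, the identity $h_*\cO_{X'_{h^{-1}(Z)}}=\cO_{\widetilde X_Z}$ from the theorem on formal functions identifies formal \emph{regular} functions, but the element you must descend is a global \emph{rational} function $\eta$ on $X'$ whose formal germ along $f^{-1}(Y)$ is merely a \emph{fraction} of pulled-back formal functions; concluding that $\eta$ itself is generically constant on the fibres of $h$ requires an extra rigidity step (e.g.\ the relative differential $d_{X'/\widetilde X}\eta$ vanishes to all orders along $h^{-1}(Z)$, hence identically by the Krull intersection theorem and irreducibility of $X'$, which in characteristic $0$ gives constancy on general fibres). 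For the finite factor $g$, "a trace argument" taken at face value --- applying the minimal polynomial or the trace of $\eta$ over $K(X)$ --- only shows that $K(X_Y)$ is \emph{algebraic} over $K(X)$, i.e.\ a G2-type conclusion, which is strictly weaker than G3. To actually land in $K(X)$ you need the projection formula $\mathrm{Tr}(\hat g^*\xi\cdot\alpha(\lambda))=\xi\cdot\alpha_{X,Y}(\mathrm{Tr}\,\lambda)$ for $\lambda\in K(\widetilde X)$, which rests on the identification of $\cO_{\widetilde X_{g^{-1}(Y)}}$ with $g_*\cO_{\widetilde X}\otimes_{\cO_X}\cO_{X_Y}$ (completion of a finite module is base change) and on separability; with a $\lambda$ of nonzero trace one then gets $\xi=\alpha_{X,Y}(\mathrm{Tr}(\eta\lambda)/\mathrm{Tr}(\lambda))$. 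Both missing steps can be supplied, so the strategy is viable, but as written the argument stops exactly where the proposition ceases to be formal field theory and starts to use geometry.
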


\subsection{Generating subvarieties}

Let $X = G/P$ 
with $G$ a reductive group and $P$ a parabolic subgroup. 

\begin{defn}
Let $Y$ be a closed irreducible subvariety in $X$.

1. Let $y \in Y$ and $\varphi : G \to Y,\ g \mapsto g\cdot y$. We
define $G_Y$ to be the subgroup generated by $ \varphi^{-1}(G)$. One
easily checks that this definition does not depend on the choice of
$y$. 

2. The subvariety $Y$ generates $X$ if $G_Y = G$
\end{defn}

\begin{prop}[{see \cite[Corollary 13.8]{B1}}]
A generating subvariety of $X$ is G3.
\end{prop}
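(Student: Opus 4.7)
The plan is to prove surjectivity of the canonical map $\alpha_{X,Y} \colon K(X) \to K(X_Y)$. Injectivity is automatic: $X = G/P$ is smooth (hence normal) and $Y$, being a closed irreducible subvariety, is in particular connected, so Proposition~\ref{connectfield} already gives that $K(X_Y)$ is a field. All the content is therefore in showing that every formal rational function along $Y$ is already globally rational, and the strategy is to use the $G$-action to spread a formal function along $Y$ to one along a chain of translates $gY$ whose union exhausts $X$.

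Fix $y_0 \in Y$ and set $H = \{g \in G \mid g\cdot y_0 \in Y\}$, so that $G_Y = \langle H\rangle$ by definition. First I would observe two basic facts. On one hand, for each $g \in G$ the left translation $\tau_g \colon X \to X$ is an isomorphism of schemes which sends $Y$ isomorphically to $gY$ and therefore induces an isomorphism of formal completions $X_{Y} \xrightarrow{\sim} X_{gY}$, hence an isomorphism $K(X_{gY}) \xrightarrow{\sim} K(X_Y)$; in particular a formal function $\hat f \in K(X_Y)$ transports to a formal function $\tau_{g^{-1}}^{\ast} \hat f \in K(X_{gY})$, and $gY$ is G3 in $X$ if and only if $Y$ is. On the other hand, for each $g \in H$ we have $g y_0 \in Y \cap gY$, so the two subvarieties always meet; more generally any translate $g_1 \cdots g_n Y$ with $g_i \in H \cup H^{-1}$ is connected to $Y$ by a chain of translates with pairwise nonempty consecutive intersections.

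The extension of $\hat f$ then proceeds by gluing along this chain of intersections, using the standard fact that two formal rational functions along closed subvarieties $Y_1, Y_2 \subset X$ with $Y_1 \cap Y_2 \neq \emptyset$, if they agree on the intersection, produce a formal rational function along $Y_1 \cup Y_2$. Iterating this construction over finite products of elements of $H$ yields a formal rational function along the union $\bigcup_{g \in G_Y} gY$. Since $G_Y = G$ and $G$ acts transitively on $X$, this union is all of $X$, and a formal rational function along $X$ is just a rational function on $X$; by construction its image under $\alpha_{X,Y}$ is $\hat f$, proving surjectivity.

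The main obstacle is the gluing step: \emph{a priori} the transported function $\tau_{g^{-1}}^{\ast}\hat f \in K(X_{gY})$ has no reason whatsoever to agree with $\hat f$ on the overlap $Y \cap gY$, so one cannot simply glue the $G$-translates. The standard way to get around this is to work infinitesimally, replacing arbitrary $g \in H$ by one-parameter unipotent subgroups $U(\alpha) \cong \G_a$ with $U(\alpha) \not\subset S^P(Y)$: a formal function along $Y$ extends \emph{canonically and compatibly} to a formal function along $U(\alpha)\cdot Y$ via Taylor expansion in the $\G_a$-parameter (the action being algebraic forces the expansion to be consistent). The generating hypothesis $G_Y = G$ ensures that finitely many such unipotent one-parameter subgroups (one can take one for each simple root not stabilising $Y$) have the property that their combined orbits of $Y$ fill out $X$, and iterating the $\G_a$-extension along these finitely many directions produces the desired global rational extension of $\hat f$.
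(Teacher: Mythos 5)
The paper does not actually prove this proposition---it is quoted from B\u adescu's book (Corollary 13.8), where it appears as Chow's theorem---so your attempt has to stand on its own, and it does not: the decisive step is the claim that a formal rational function along $Y$ ``extends canonically and compatibly'' along the orbit of a one-parameter subgroup $U(\alpha)\cong\G_a$ by Taylor expansion in the group parameter. That mechanism is false. The series $\sum_n \frac{t^n}{n!}D^n\hat f$ (with $D$ the vector field generating the action) is naturally an element of $K(X_Y)[[t]]$, i.e.\ it lives on the formal completion of $\G_a\times X$ along $\{0\}\times Y$; it does not define an element of $K(X_{U(\alpha)\cdot Y})$, and nothing about algebraicity of the action forces it to. To see that the mechanism proves too much: take $X=\P^1$, $Y=\{0\}$, and the translation subgroup $\G_a$, whose orbit of $0$ is dense in $\P^1$. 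Your extension step, which as written uses the generating hypothesis only to guarantee that the combined orbits fill out $X$ (true here), would then force every element of $K(X_Y)=k((x))$ to be rational---but $\sum_n n!\,x^n$ is not, and indeed a point is not G3 in $\P^1$. The generating hypothesis $G_Y=G$ must therefore enter the extension step itself, not merely the ``orbits cover $X$'' bookkeeping, and your argument gives it no such role. (You correctly flagged that the naive gluing of translates fails; the Taylor-expansion repair inherits essentially the same defect. The auxiliary gluing claim for formal rational functions along $Y_1\cup Y_2$ is also not a ``standard fact'' for total quotient rings, though you do not ultimately rely on it.)

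For contrast, the actual proof of Chow's theorem is not a pointwise analytic continuation: it runs through an incidence (join) construction, comparing $K(X_Y)$ with the ring of formal functions on an auxiliary variety fibred over $X$ whose fibres are complete, and then descends the G3 property along proper surjections---exactly the pattern of Propositions \ref{HiroMatsu} and \ref{Badescu} that this paper reuses in the proof of Theorem \ref{main1}. If you want a proof rather than a citation, that is the route to take; the element-by-element spreading of a single formal function under the group action cannot be made to work.
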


\begin{defn}
A Schubert variety $X(t)$ with $t\in W^P$ is called minimal generating
if $t$ has a reduced expression $s_{\beta_1} \cdots s_{\beta_k}$ such
that the roots $(\beta_i)_{i \in [1,k]}$ are all simple distinct and
$\Sigma(P) = \{\beta_1 , \cdots , \beta_k\}$.  
\end{defn}

\begin{lemma}
\label{lemma-gen}
A minimal generating Schubert variety is a generating subvariety.
\end{lemma}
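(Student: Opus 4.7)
The plan is to pick a convenient base point in $X(t)$ and then identify enough elements of $\{g \in G : g \cdot y \in X(t)\}$ to generate $G$. I would take $y = eP/P$, so that with $\pi : G \to G/P$ the natural projection, one has $\{g \in G : g \cdot y \in X(t)\} = \pi^{-1}(X(t))$; the task then reduces to showing that $\pi^{-1}(X(t))$ contains a generating set of $G$.

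Since $X(t)$ is $B$-stable and contains every Schubert variety $X^P(v)$ with $v \le t$ in the Bruhat order, I would exploit two such inequalities. First, $e \le t$, so $eP/P \in X(t)$ and therefore $P = \pi^{-1}(eP/P) \subset \pi^{-1}(X(t))$. Second, since $s_{\beta_1}\cdots s_{\beta_k}$ is a reduced expression for $t$, each simple reflection $s_{\beta_i}$ satisfies $s_{\beta_i} \le t$, so the $T$-fixed point $s_{\beta_i} P/P$ lies in $X(t)$; fixing a representative $n_i \in N_G(T)$ of $s_{\beta_i}$, this yields $n_i \in \pi^{-1}(X(t))$. Hence $G_{X(t)}$ contains $P$ together with $n_1, \dots, n_k$.

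It then remains to check that $P \cup \{n_1,\dots,n_k\}$ generates $G$. Here the hypothesis $\Sigma(P) = \{\beta_1,\dots,\beta_k\}$ is exactly what is needed: by definition of $\Sigma(P)$, the parabolic $P$ already contains $B$ and representatives of every simple reflection $s_\alpha$ with $\alpha \notin \Sigma(P)$, so adjoining $n_1,\dots,n_k$ produces $B$ together with representatives of \emph{all} simple reflections, which generate $G$ by the standard Bruhat description of a reductive group. I do not expect a genuine obstacle: the only real step is the choice $y = eP/P$, which reduces everything to the two Bruhat comparisons $e \le t$ and $s_{\beta_i} \le t$ and a standard generation statement for $G$ relative to the parabolic $P$.
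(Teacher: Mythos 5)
Your proof is correct, and it takes a genuinely different route from the paper's. The paper works at the $T$-fixed point $y = tP/P$ and shows that each negative simple root subgroup $U(-\beta_i)$ lies in $G_{X(t)}$: one computes $U(-\beta_1)\cdot y = tU(-t^{-1}(\beta_1))\cdot P = y$ because $t^{-1}(\beta_1)<0$ (this sign computation is where the distinctness of the $\beta_i$ enters), and an induction on the truncated words $s_{\beta_i}\cdots s_{\beta_k}$ handles the remaining roots; generation of $G$ by $B$ together with the $U(-\alpha)$ for $\alpha$ simple then finishes. You instead work at the base point $eP/P$, which puts $P$ inside $G_{X(t)}$ for free, and you use the subword property of the Bruhat order to place representatives $n_i$ of the simple reflections $s_{\beta_i}$ in $G_{X(t)}$ --- note that $s_{\beta_i}\in W^P$ precisely because $\beta_i\in\Sigma(P)$, so $s_{\beta_i}P/P$ really is a $T$-fixed point of $X(t)$ distinct from $eP/P$, as your argument needs --- and conclude via the Bruhat decomposition. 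Your version avoids the root computation entirely and in fact only uses that the expression is reduced and that $\Sigma(P)\subseteq\{\beta_1,\dots,\beta_k\}$, not that the $\beta_i$ are pairwise distinct; the paper's version is more hands-on and exhibits the one-parameter subgroups $U(-\beta_i)$ themselves inside $G_{X(t)}$, at the cost of the sign argument $t^{-1}(\beta_1)<0$. Both arguments are complete.
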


\begin{proof}
Let $Y = X^P(t)$ be a minimal generating Schubert variety with reduced
expression $t = s_{\beta_1} \cdots s_{\beta_k}$ such that the roots
$(\beta_i)_{i \in [1,k]}$ are all simple distinct and $\Sigma(P) =
\{\beta_1 , \cdots , \beta_k\}$. We prove by induction on $i$ that
$U({-\beta_{i}}) \subset G_Y$.  

Let $y = t \cdot P \in Y$ and let $\varphi : G \to X, g \mapsto g
\cdot y$. We have $U({-\beta_1}) \cdot y = t U({- t^{-1}(\beta_1)})
\cdot P$ and $t^{-1}(\beta_1) = s_{\beta_k} \cdots s_{\beta_{2}}
(-\beta_1)$. Since all the roots $(\beta_i)_{i \in [1,k]}$ are simple
and distinct it follows that $t^{-1}(\beta_1) < 0$. Therefore $U({-
  \beta_1}) \subset \varphi^{-1}(Y)$. Since $Y' = X(s_{\beta_2} \cdots
s_{\beta_{k}}) \subset Y$ it follows that $G_{Y'} \subset G_Y$ and the
result follows by induction. 
\end{proof}

\section{Results on extension of formal functions}

\subsection{Extension of formal functions for Schubert varieties}

In this section we want to extend a connectedness result from \cite{P}
to a result on extension of formal functions. 

\begin{defn}
\label{admis}
Let $w,v\in W^P$, we say that $X^P(v)$ is {\bf admissible} in $X^P(w)$
if $S^P(w)X^P(v)=X^P(w)$ and $\Sigma^P(w)\cap\Sigma^P(v)=\emptyset$. 
\end{defn}

\begin{prop}[{see \cite[Theorem 1.5]{P}}]
\label{Perrin}
Let $f:Y\to X=G/P$ a proper morphism with $Y$ irreducible. Let $w,v\in
W^P$ such that $X^P(v)$ is admissible for $X^P(w)$. Suppose that
$[f(Y)] \cdot [X^P(v)] \neq 0$. Then $f^{-1}(X^P(w))$ is connected.  
\end{prop}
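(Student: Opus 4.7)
The plan is to construct an incidence correspondence mapping properly onto $f^{-1}(X^P(w))$, and deduce connectedness of the target from that of the correspondence. Set $S = S^P(w)$, the parabolic subgroup of $G$ stabilising $X^P(w)$. Admissibility gives $S \cdot X^P(v) = X^P(w)$, so the action map $\mu \cc S \times X^P(v) \to X^P(w)$ is surjective. Form
$$T = (f \times \mu)^{-1}(\Delta_X) = \{(y,s,x) \in Y \times S \times X^P(v) : f(y) = sx\},$$
where $\Delta_X \subset X \times X$. The first projection $p \cc T \to Y$ is proper with image $f^{-1}(\mu(S \times X^P(v))) = f^{-1}(X^P(w))$, so connectedness of $T$ implies that of $f^{-1}(X^P(w))$.

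It remains to prove that $T$ is connected, which is the heart of the argument. Since $S$ is connected, $[sX^P(v)] = [X^P(v)]$ for all $s \in S$, so the hypothesis $[f(Y)] \cdot [X^P(v)] \ne 0$ implies $f(Y) \cap sX^P(v) \ne \emptyset$ for every $s$; equivalently, the projection $T \to S$ is surjective. One is then looking at the proper base change $T \to S \times X^P(v)$ of $f$ along $\mu$, with full image. Connectedness of $T$ should follow from a Fulton--Hansen-type theorem applied to $f \times \mu \cc Y \times S \times X^P(v) \to X \times X$ and the diagonal $\Delta_X$: the dimension condition $\dim f(Y) + \dim X^P(w) > \dim X$ follows from $[f(Y)] \cdot [X^P(v)] \ne 0$ (yielding $\dim f(Y) \ge \codim X^P(v)$) together with the strict inequality $\dim X^P(w) > \dim X^P(v)$ (outside the trivial case $X^P(w) = X^P(v)$), while the required positivity should be supplied by the admissibility condition $\Sigma^P(w) \cap \Sigma^P(v) = \emptyset$, which encodes the fact that the simple-root ``deformation directions'' of $X^P(w)$ and $X^P(v)$ are disjoint.

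The main obstacle is making this last point precise, since the tangent bundle of $X = G/P$ need not be ample in general and so no off-the-shelf Fulton--Hansen theorem applies. My preferred alternative route is an induction on $\ell(w)$ via a Bott--Samelson resolution $\wt X^P(w) \to X^P(w)$ coming from a reduced decomposition of $w$ in which the simple reflections of $\Sigma^P(w)$ appear first: at each step one peels off a simple root $\alpha \in \Sigma^P(w)$, which by $\Sigma^P(w) \cap \Sigma^P(v) = \emptyset$ is a direction along which $X^P(w)$ but not $X^P(v)$ deforms, reducing the connectedness of $T$ to a $\P^1$-bundle over a smaller admissible pair for which the inductive hypothesis applies; the base case occurs when $\Sigma^P(w) = \emptyset$, i.e.\ $X^P(w) = X$, in which case $f^{-1}(X^P(w)) = Y$ is connected by the irreducibility of $Y$. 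The technical point in either approach is to verify that both admissibility and the intersection hypothesis $[f(Y)] \cdot [X^P(v)] \ne 0$ persist through the reduction.
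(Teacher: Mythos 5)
This proposition is not proved in the paper at all: it is imported verbatim from \cite[Theorem 1.5]{P}, so the only ``proof'' the paper offers is the citation, and your attempt has to be measured against Perrin's argument rather than against anything in this text. Your opening reduction is essentially sound: the set $T=\{(y,s,x)\in Y\times S\times X^P(v): f(y)=sx\}$ surjects onto $f^{-1}(X^P(w))$ because $S\cdot X^P(v)=X^P(w)$, and a continuous image of a connected set is connected (note, though, that $p\cc T\to Y$ is \emph{not} proper, since the parabolic $S$ is not complete; you only need surjectivity here, but the slip is symptomatic of the problem below). The genuine gap is that the connectedness of $T$ --- which you yourself identify as ``the heart of the argument'' --- is never established. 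Your first route cannot work as stated: a Fulton--Hansen connectedness theorem for a general $G/P$ with only the dimension inequality $\dim f(Y)+\dim X^P(w)>\dim X$ is simply false (two maximal linear spaces from different rulings on an even-dimensional quadric already violate the set-theoretic version), and the admissibility condition $\Sigma^P(w)\cap\Sigma^P(v)=\emptyset$ is precisely the extra hypothesis invented in \cite{P} to repair this failure; saying that it ``should supply the required positivity'' is naming the difficulty, not resolving it. Your second route, the induction on $\ell(w)$ via a Bott--Samelson resolution, is only a plan: the inductive step --- that peeling off a simple reflection in $\Sigma^P(w)$ produces a $\P^1$-bundle over a strictly smaller \emph{admissible} pair while preserving the nonvanishing of the intersection number --- is exactly where all the content lies, and it is not carried out (nor is it clear that admissibility is inherited in the form you need).

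For comparison, the mechanism actually used in \cite{P}, and mirrored in this paper's proof of Theorem \ref{main1}, replaces your non-complete parameter space $S\times X^P(v)$ by the incidence variety $Z=\{(y,\bar h)\in Y\times G/Q : f(y)\in h(X^P(v))\}$ with $Q=S^P(v)$ the stabiliser of the \emph{small} Schubert variety. Because $G/Q$ is projective, both projections from $Z$ are proper; $Z\to Y$ is a Schubert-variety bundle, so $Z$ is irreducible; $Z\to G/Q$ is surjective by the hypothesis $[f(Y)]\cdot[X^P(v)]\ne 0$; and the disjointness $\Sigma^P(w)\cap\Sigma^P(v)=\emptyset$ enters exactly as in Lemma \ref{lemita}, to show that the preimage of $f^{-1}(X^P(w))$ in $Z$ contains the preimage of a connected generating Schubert variety of $G/Q$. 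That compactification of the parameter space, and the Stein-factorisation-type argument over $G/Q$ that it enables, is the missing ingredient in your proposal.
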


We extend this result to a result on the G3 property.

\begin{thm}\label{main1}
Let $f:Y\to X=G/P$ a proper morphism. Let $w,v\in W^P$ such that
$X^P(v)$ is admissible in $X^P(w)$. Suppose that $[f(Y)] \cdot
[X^P(v)] \ne
0$. Then $f^{-1}(X^P(w))$ is G3 in $Y$. 
\end{thm}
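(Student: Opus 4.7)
The plan is to upgrade the connectedness statement of Proposition \ref{Perrin} to the G3 property, with Proposition \ref{Badescu} as the main engine and the G3 input coming from a minimal generating Schubert variety via Lemma \ref{lemma-gen}.

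First I would invoke Proposition \ref{Perrin} applied to $f:Y\to X$ to deduce that $f^{-1}(X^P(w))$ is connected. Applied in the same way to the composition $\nu\circ f$ with the normalization $\nu:Y'\to Y$, the result also shows that $\nu^{-1}(f^{-1}(X^P(w)))$ is connected. By Proposition \ref{connectfield}, this yields that $K(Y_{f^{-1}(X^P(w))})$ is a field, which is one of the two hypotheses needed for Proposition \ref{Badescu}.

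Next I would manufacture a G3 subvariety landing inside the preimage of $X^P(w)$. By Lemma \ref{lemma-gen} together with the preceding proposition that generating subvarieties are G3, any minimal generating Schubert variety $X^P(t)$ is G3 in $X$. Since such an $X^P(t)$ typically does not sit inside $X^P(w)$, I would pass to an auxiliary proper surjection tailored to the admissibility data: the condition $S^P(w)\cdot X^P(v)=X^P(w)$ makes the multiplication map $m:\widetilde X := S^P(w)\times^{Q} X^P(v)\to X^P(w)$, with $Q := S^P(w)\cap vPv^{-1}$, proper surjective, and the disjointness $\Sigma^P(w)\cap\Sigma^P(v)=\emptyset$ is precisely what lets one assemble a minimal generating reduced expression by combining simple reflections acting on $S^P(w)/Q$ with those appearing in $X^P(v)$. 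Forming the fibre product $\widetilde Y := Y\times_X\widetilde X$ along $f$ and $m$, I obtain a proper surjection $\pi:\widetilde Y\to f^{-1}(X^P(w))$, with nonemptiness and surjectivity coming from the hypothesis $[f(Y)]\cdot [X^P(v)]\ne 0$ in the spirit of \cite{P}. Pulling back the minimal generating $X^P(t)$ along the composition $\widetilde Y\to\widetilde X\to X$ and invoking Proposition \ref{HiroMatsu} then produces a G3 subvariety $\widetilde Z\subset\widetilde Y$ with $\pi(\widetilde Z)\subset f^{-1}(X^P(w))$.

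Finally, rerunning the connectedness argument of the second paragraph for $\widetilde Y$ in place of $Y$ gives the remaining field condition on $\widetilde Y$, after which Proposition \ref{Badescu} applied to $\pi$, to $f^{-1}(X^P(w))\subset Y$, and to $\widetilde Z\subset\widetilde Y$ yields that $f^{-1}(X^P(w))$ is G3 in $Y$. The main obstacle is the construction in the preceding paragraph: building $\widetilde X\to X^P(w)$ so that it genuinely supports a minimal generating Schubert variety for $X$ (not merely for $X^P(w)$), which requires the full strength of the disjointness $\Sigma^P(w)\cap\Sigma^P(v)=\emptyset$ to separate the simple reflection data cleanly between the base $S^P(w)/Q$ and the fibre $X^P(v)$. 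Surjectivity of $\pi$ also needs to be extracted carefully from the intersection hypothesis, and this step is where the argument most closely mirrors the proof of Proposition \ref{Perrin} in \cite{P}.
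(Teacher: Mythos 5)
Your first step---using Proposition \ref{Perrin} together with Proposition \ref{connectfield} (applied also to the normalization) to establish that $K(Y_{f^{-1}(X^P(w))})$ is a field---matches the paper. The gap is in the auxiliary construction. Your space $\widetilde X = S^P(w)\times^{Q}X^P(v)$ maps onto $X^P(w)$ only, so the fibre product $\widetilde Y = Y\times_X\widetilde X$ maps onto $f^{-1}(X^P(w))$ and \emph{not} onto $Y$. This breaks the argument at both ends: Proposition \ref{HiroMatsu} requires a proper \emph{surjective} morphism onto the ambient space containing the G3 subvariety, so you cannot pull back a minimal generating $X^P(t)\subset X$ along $\widetilde Y\to\widetilde X\to X$ (whose image lies in $X^P(w)$, and $X^P(t)\not\subset X^P(w)$ in general); and Proposition \ref{Badescu} requires the morphism $X'\to X$ to be surjective onto $Y$ itself, which $\pi$ is not. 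You correctly sense that something is wrong here, but the obstruction is not merely about assembling a reduced expression: the whole tower sits over $X^P(w)$, whereas the G3 input has to come from a space that dominates $Y$.

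The paper resolves this by turning the construction around. Set $Q=S^P(v)$ (the stabiliser of $X^P(v)$, not $S^P(w)\cap vPv^{-1}$) and let $Z=\{(y,\overline h)\in Y\times G/Q \mid f(y)\in h(X^P(v))\}$. The first projection $p:Z\to Y$ is a Schubert-variety bundle, hence surjective with $Z$ irreducible; the second projection $q:Z\to G/Q$ is surjective precisely because $[f(Y)]\cdot[X^P(v)]\ne 0$. The G3 subvariety is then $q^{-1}(X^Q(t))$ for a minimal generating Schubert variety $X^Q(t)$ \emph{in $G/Q$} (Lemma \ref{lemma-gen} plus Proposition \ref{HiroMatsu} applied to the surjection $q$), and admissibility is used exactly once, in Lemma \ref{lemita}, to show that $\Sigma^P(w)\cap\Sigma(Q)=\emptyset$ forces $U(-\beta_1)\cdots U(-\beta_k)Q\cdot X^P(v)\subset X^P(w)$, i.e.\ $q^{-1}(X^Q(t))\subset p^{-1}(f^{-1}(X^P(w)))$. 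Only then does Proposition \ref{Badescu}, applied to the surjection $p:Z\to Y$, descend the G3 property to $f^{-1}(X^P(w))\subset Y$. To repair your argument you would need to replace $S^P(w)\times^Q X^P(v)$ by the full $G$-translate family $G\times^{S^P(v)}X^P(v)\to X$, which is surjective over $X$; at that point you recover the paper's incidence variety and its proof.
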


\begin{proof}
Using Propositions \ref{connectfield}, \ref{Badescu} and \ref{Perrin},
we may assume that $Y$ is normal. Let $Q=S^P(v)$ and $Z = \{ ( y ,
\overline{ h } ) \in Y \times G/Q \ | \ f ( y ) \in h ( X^P ( v ) )
\}$. Consider the incidence diagram: 
\[\xymatrix{Z \ar[r]^-p \ar[d]_-q & Y \\
G/Q. & \\}\]
The map $p$ gives $Z$ the structure of a $X^Q(u)$-bundle on $X$, where
$X^Q(u)$ is the closure in $G/Q$ of the $P$-orbit of the Schubert
variety $X^Q(v^{-1})$. Therefore, $p$ is surjective and $Z$ is
irreducible. Furthermore, since $[{f(Y)}] \cdot [X^P(v)] \ne 0$, the
map $q$ is also surjective.

\begin{lemma}\label{lemita}
Let $X^Q(t)$ be a minimal generating Schubert variety in $G/Q$ and let
$W = f^{-1}(X^P(w))$. We have $q^{-1}(X^Q(t))\subset p^{-1}(W)$. 

\end{lemma}

\begin{proof}
The element $t\in W^Q$ has a reduced expression $s_{\beta_1} \cdots
s_{\beta_k}$ such that the roots $(\beta_i)_{i \in [1,k]}$ are all
simple distinct and $\Sigma(Q) = \{\beta_1 , \cdots , \beta_k\}$. The
Schubert variety $X^Q(t)$ is thus the closure of $U(-\beta_1) \cdots
U(-\beta_k) \cdot Q$. 

We prove $f^{-1}(h(X^P(v))\subset f^{-1}(X^P(w)$ for $h\in U(-\beta_1)
\cdots U(-\beta_k)Q$. It is enough to prove that $U(-\beta_1) \cdots
U(-\beta_k) Q \cdot X^P(v) \subset X^P(w)$. Since $X^P(v)$ is
admissible, the sets $\Sigma^P(w)$ and $\Sigma^P(v)=\Sigma(Q)=\{\beta_1,
\cdots ,\beta_k\}$ are disjoint. Therefore, $U(-\beta_i) \subset
S^P(w)$ for al $i \in [1,k]$. Since $Q$ is the stabiliser of $X^P(v)$,
we have: 
\[U(-\beta_1) \cdots U(-\beta_k) Q \cdot X^P(v) = U(-\beta_1) \cdots
U(-\beta_k) \cdot X^P(v) \subset X^P(w).\] 
This concludes the proof.
\end{proof}

Lemma \ref{lemma-gen} implies that $X^Q(t)$ is G3 in $G/Q$ and
Proposition \ref{HiroMatsu} implies that $q^{-1}(X^Q(t))$ is G3 in
$Z$. Consider $g:\overline{Z}\to Z$ the normalization of $Z$ and let
${W} = f^{-1}(X^P(w))$. By Proposition \ref{Perrin}, we have that
$\overline{W} = g^{-1}p^{-1}f^{-1}(X^P(w))$ is connected in
$\overline{Z}$. Therefore,
$K(\overline{Z}_{\overline{W}})=K(Z_{p^{-1}(W)})$ is a field. On the
other hand, $q^{-1}(X^Q(t))\subset p^{-1}(W)$ by Lemma
\ref{lemita}. Applying Proposition \ref{Badescu} we get that
$p^{-1}(W)$ is G3 in $Z$ and that $W$ is G3 in $Y$. 
\end{proof}

\subsection{Application to the diagonal of flag varieties}
\label{section-flag}

In this section, we prove a G3-Bertini type result for the diagonal of
partial flag varieties. 

Given a vector space $V$ and a sequence $(a_i)_{i \in [1,r]}$ of
positive integers, we write $\F((a_i)_{i \in [1,r]} ; V)$ for the
variety parametrising partial flags $ 0 \subset E_1 \subset \cdots
\subset E_r \subset V$ where $E_i$ is a vector subspace of $V$ of
dimension $a_i$. If $\dim V = n$, we will also denote this variety by
$\F((a_i)_{i \in [1,r]} ; n)$. The following result is a variation on
a classical trick (see for example \cite[Th\'eor\`eme 7.1]{debarre}).  

Let $V$ be an $n-$dimensional vector space. Let $X = \F((a_i)_{i \in
  [1,r]} ; V)$. Let $W = V\oplus V$ and $p_1 , p_2 : W \to V$ be the
projections. These projections induce a rational map $p : \F((a_i)_{i
  \in [1,r]} ; W) \to  X \times X$ defined by  
\[(E_1 \subset \cdots \subset E_r) \mapsto (p_1(E_1) \subset \cdots
\subset p_1(E_r) , p_2(E_1) \subset \cdots \subset p_2(E_r)).\] 

\begin{lemma}\label{DTrick}
The fiber of $p$ is isomorphic to 
$$\prod_{i = 1}^r {\rm GL}(a_i - a_{i-1})$$
with $a_0 = 0$.
\end{lemma}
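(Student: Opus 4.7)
The plan is to work out the fiber of $p$ over a general pair of flags $(F_\bullet, G_\bullet) \in X \times X$ explicitly by describing which subspaces of $W$ can arise as preimages, and then identifying the resulting parameter space with the claimed product of general linear groups.

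Fix a point $(F_\bullet, G_\bullet)$ in the image of $p$, so in particular $p$ is defined on the fiber above it. A flag $(E_\bullet) \in \F((a_i); W)$ lies in this fiber if and only if $p_1(E_i) = F_i$ and $p_2(E_i) = G_i$ for all $i \in [1,r]$. Since $\dim E_i = a_i = \dim F_i = \dim G_i$ and $E_i$ surjects onto both $F_i$ and $G_i$, both restrictions $p_1|_{E_i}$ and $p_2|_{E_i}$ are isomorphisms, and $E_i$ is the graph of the linear isomorphism
\[
\phi_i := p_2 \circ (p_1|_{E_i})^{-1} : F_i \xrightarrow{\sim} G_i.
\]
The chain condition $E_i \subset E_{i+1}$ translates into the compatibility $\phi_{i+1}|_{F_i} = \phi_i$, so the datum of $(E_\bullet)$ in the fiber is equivalent to a single filtration-preserving isomorphism $\phi := \phi_r : F_r \to G_r$ satisfying $\phi(F_i) = G_i$ for every $i \in [1,r]$.

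To finish, I would fix bases of $F_r$ and $G_r$ adapted to the two flags (so that the first $a_i$ vectors span $F_i$, resp.~$G_i$); in these bases the set of admissible $\phi$ is identified with the stabiliser inside $\GL(a_r)$ of a standard partial flag of type $(a_1, \dots, a_r)$. Its Levi decomposition then yields the identification with $\prod_{i=1}^r \GL(a_i - a_{i-1})$, where the $i$-th factor records the induced isomorphism $F_i/F_{i-1} \to G_i/G_{i-1}$ on the associated graded piece. The main point to check is that this identification is natural in $(F_\bullet, G_\bullet)$, so that it holds uniformly over the whole open locus of $X \times X$ on which $p$ is defined, rather than only pointwise.
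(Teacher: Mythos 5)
Your reduction of the fiber to the set of isomorphisms $\phi : F_r \to G_r$ with $\phi(F_i)=G_i$ --- via graphs and the compatibility $\phi_{i+1}|_{F_i}=\phi_i$ --- is exactly the paper's argument, carried out in more detail. The problem is your last step. In adapted bases the set of admissible $\phi$ is, as you say, the stabiliser in $\GL(a_r)$ of the standard partial flag of type $(a_1,\dots,a_r)$, i.e.\ the parabolic subgroup $P$ of block upper triangular invertible matrices. But the Levi decomposition writes $P\cong L\ltimes U$ with $L=\prod_{i=1}^r \GL(a_i-a_{i-1})$ and $U$ the unipotent radical; it does not identify $P$ with $L$. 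For $r\ge 2$ the unipotent radical is nontrivial and the dimensions already disagree: for $X=\F(1,2;3)$ one computes $\dim\F(1,2;6)-2\dim\F(1,2;3)=9-6=3$, which is the dimension of the Borel of $\GL(2)$, not of $\GL(1)\times\GL(1)$. So ``its Levi decomposition then yields the identification'' is a non sequitur: the fiber is isomorphic to the parabolic $P$ (as a variety, to $\prod_i\GL(a_i-a_{i-1})\times \mathbb{A}^{\dim U}$), not to the Levi factor alone.

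In fairness, this is precisely where the paper's own proof also stops (``the whole flag $G$ can be seen as an isomorphism between $E_r$ and $F_r$ such that the image of $E_i$ is $F_i$\dots\ The result follows''), so you have reproduced the paper's argument together with its gap; the lemma as printed is only literally correct in the Grassmannian case $r=1$. The correct statement is that the fiber is the parabolic subgroup of $\GL(a_r)$ stabilising a partial flag of type $(a_1,\dots,a_r)$. Since all that is needed downstream is that $p$ is dominant with irreducible fibers of constant dimension over its domain of definition (and a parabolic is irreducible), the correction is harmless for the rest of the paper --- but your proof as written does not establish the statement as given, and for $r\ge 2$ no proof can.
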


\begin{proof}
Let $(E,F) \in X \times X$ with $E = E_1\subset \cdots \subset
E_r\subset V$ and $F = F_1\subset \cdots \subset F_r\subset V$. A flag
$G = G_1\subset \cdots \subset G_r \subset W$ is in $p^{-1}(E,F)$ if
and only if $p_1(E_i)=G_i$ and $p_2(F_i)=G_i$ for all $i$. In that
case $G_i$ can be seen as the graph in $E_i\times F_i$ of an
isomorphism. Therefore, the whole flag $G$ can be seen as an
isomorphism between $E_r$ and $F_r$ such that the image of $E_i$ is
$F_i$ for all $i<r$. The result follows. 
\end{proof}

Let $X = \F((a_i)_{i \in [1,r]} ; V)$ and let $H$ be a hyperplane of
$V$. Write $\Delta_H$ for the diagonal in $\F((a_i)_{i \in [1,r]} ; H)
\times \F((a_i)_{i \in [1,r]} ; H)$ and $\Delta$ for the diagonal in
$X \times X$. We have an inclusion $\Delta_H \subset \Delta$.  

\begin{thm}
Let $f:Y\to X\times X$ be a proper morphism with $[f(Y)] \cdot
[\Delta_H] \ne 0$. Then $f^{-1}(\Delta)$ is $G3$ in $Y$. 
\end{thm}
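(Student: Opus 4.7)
The plan is to lift the problem to the ambient flag variety $\wt X := \F((a_i)_{i \in [1,r]}; W)$ where $W = V \oplus V$, apply Theorem \ref{main1} there, and descend to $Y$ via Proposition \ref{Badescu}. Let $p: \wt X \to X \times X$ denote the rational map of Lemma \ref{DTrick}, and let $\Gamma \subset \wt X \times (X \times X)$ be the closure of its graph, with the birational projection $q: \Gamma \to \wt X$ and the proper surjective projection $\pi: \Gamma \to X \times X$. Form the fibre product $\wt Y := Y \times_{X \times X} \Gamma$, with the proper surjective projection $g: \wt Y \to Y$ (as a pullback of $\pi$) and the proper map $\tilde f: \wt Y \to \wt X$ obtained by composing the other projection with $q$. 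The generic fibre of $g$ is isomorphic to $\prod_i \GL(a_i - a_{i-1})$, hence irreducible, so $\wt Y$ is irreducible provided $Y$ is.

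The two Schubert varieties I would use are $\cS_\Delta := \F((a_i); \Delta_V)$ and $\cS_{\Delta_H} := \F((a_i); \Delta_H)$ inside $\wt X$, where $\Delta_V = \{(v,v) : v \in V\}$ and $\Delta_H = \{(v,v) : v \in H\}$. Fixing a Borel $B'$ of $\GL(W)$ corresponding to a basis refining the flag $\Delta_H \subset \Delta_V \subset W$ makes both into $B'$-stable irreducible closed subvarieties of $\wt X$, hence Schubert varieties. The stabilisers $S^{P'}(\cS_\Delta)$ and $S^{P'}(\cS_{\Delta_H})$ in $\GL(W)$ are the maximal parabolics fixing $\Delta_V$ and $\Delta_H$ respectively, so $\Sigma^{P'}(\cS_\Delta)$ and $\Sigma^{P'}(\cS_{\Delta_H})$ are two distinct singletons among the simple roots of $\GL_{2n}$, hence disjoint. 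Moreover the stabiliser of $\Delta_V$ surjects onto $\GL(\Delta_V)$, acting transitively on hyperplanes of $\Delta_V$; since every flag of type $(a_i)$ in $\Delta_V$ lies in some hyperplane of $\Delta_V$ (using $a_r < \dim V$), one concludes $S^{P'}(\cS_\Delta) \cdot \cS_{\Delta_H} = \cS_\Delta$. Hence $\cS_{\Delta_H}$ is admissible in $\cS_\Delta$.

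The rational map $p$ is regular on $\cS_\Delta$ and restricts to isomorphisms $\cS_\Delta \to \Delta$ and $\cS_{\Delta_H} \to \Delta_H$, so $q^{-1}(\cS_\Delta)$ sits in $\Gamma$ as a section of $\pi$ over $\Delta$. Therefore $\tilde f(\wt Y) \cap \cS_{\Delta_H}$ is in bijection with $f(Y) \cap \Delta_H$ through this section, and using the dimension formula $\dim \wt X = 2 \dim X + \sum_i (a_i - a_{i-1})^2$ of Lemma \ref{DTrick} the expected intersection dimensions agree on the two sides. A projection-formula computation along $q$ and $\pi$ then yields $[\tilde f(\wt Y)] \cdot [\cS_{\Delta_H}] = [f(Y)] \cdot [\Delta_H] \neq 0$. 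Theorem \ref{main1} applied to $\tilde f$ with $(\cS_\Delta, \cS_{\Delta_H})$ now gives that $\tilde f^{-1}(\cS_\Delta)$ is G3 in $\wt Y$, and the same section argument identifies $\tilde f^{-1}(\cS_\Delta)$ with $f^{-1}(\Delta)$ via $g$ (each point of $f^{-1}(\Delta)$ lifts uniquely through the section).

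To descend, I would apply Proposition \ref{Badescu} to $g: \wt Y \to Y$ with $Y' := \tilde f^{-1}(\cS_\Delta)$ (which is G3 in $\wt Y$) and $Y'' := f^{-1}(\Delta)$, noting $g(Y') = Y''$. Since $Y'$ is G3 hence connected, $Y'' = g(Y')$ is connected, so $K(Y_{Y''})$ is a field by Proposition \ref{connectfield}. The preimage $g^{-1}(Y'')$ fibres over $Y''$ with irreducible fibres (the closures of $\prod_i \GL(a_i - a_{i-1})$ from Lemma \ref{DTrick}) and is therefore connected, so $K(\wt Y_{g^{-1}(Y'')})$ is also a field. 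Proposition \ref{Badescu} then delivers the desired conclusion that $f^{-1}(\Delta)$ is G3 in $Y$. The main obstacles I anticipate are the admissibility check --- coordinating the choice of Borel in $\GL(W)$ so that both $\cS_\Delta$ and $\cS_{\Delta_H}$ are Schubert varieties with disjoint $\Sigma^{P'}$'s and the required orbit relation holds --- together with the reduction of the intersection-number computation in $\wt X$ to the hypothesis $[f(Y)] \cdot [\Delta_H] \ne 0$ in $X \times X$.
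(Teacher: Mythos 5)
Your proposal follows essentially the same route as the paper: lift along the rational map $p$ to $\F((a_i)_{i\in[1,r]};W)$, identify $\F((a_i)_{i\in[1,r]};\Delta_V)$ together with the flags in a hyperplane of $\Delta_V$ as an admissible pair of Schubert varieties there, apply Theorem \ref{main1} upstairs, and descend using Propositions \ref{connectfield} and \ref{Badescu}. The additional care you take (working with the graph closure of $p$, verifying admissibility and the transfer of the nonvanishing intersection condition explicitly) only fills in details the paper leaves implicit, so the argument is correct and matches the paper's proof.
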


\begin{proof}
By Proposition \ref{connectfield} and \ref{Badescu} we may assume that
$Y$ is normal. Let $p : \F((a_i)_{i \in [1,r]} ; W) \to  X \times X$
be the rational map defined above and let $\Delta_W$ be the diagonal
embedding of $V$ in $W = V \oplus V$. Define
$\tilde{\Delta}=\F((a_i)_{i \in [1,r]} ; \Delta_W)$ . We have a
commutative diagram: 
$$\xymatrix{\tilde{Y} \ar[r]^-{\tilde{f}} \ar[d]^-{\tilde{p}}
  \ar@{}[dr]|-{\square}  & \F((a_i)_{i \in [1,r]} ; W) \ar[d]_-p
  \ar@{<-^)}[r] & \tilde{\Delta} \ar[d] \\
{Y} \ar[r]^-{f} & X \times X \ar@{<-^)}[r] & {\Delta} \\}$$
%
%
Since $f$ is proper, so is $\tilde{f}$. Note that $\tilde{\Delta}$ is
a Schubert variety in $\F((a_i)_{i \in [1,r]} ; W)$. Now, let
$\tilde{H}$ be a hyperplane of $\Delta_W$. Then $\F((a_i)_{i \in
  [1,r]} ; \tilde{H})$ is an admissible Schubert variety in
$\tilde{\Delta}$. On the other hand,
$p(\F((a_i)_{i \in [1,r]} ; \tilde{H})) = \Delta_H$ thus $f(Y)$ intersects
$[\Delta_H]$. It follows that $\tilde{f}(\tilde{Y})$ intersects
$\F((a_i)_{i \in [1,r]} ; H)]$. Applying Theorem \ref{main1},
$\tilde{f}^{-1}(\tilde{\Delta})$ is $G3$ in $\tilde{Y}$. 

Since $p$ is surjective and $p(\tilde{\Delta})=\Delta$, we get
$\tilde{p}(\tilde{f}^{-1}(\tilde{\Delta})) = f^{-1}(\Delta)$ by base
extension. Applying Propositions \ref{connectfield} and \ref{Badescu},
we have that $f^{-1}(\Delta)$ is $G3$ in $Y$.
\end{proof}

\begin{remark}
Using the same technique, similar G3 results on the inverse of the
diagonal of isotropic Gra{\ss}mann varieties can be deduced from
connectedness results proved in \cite[Theorem 2.2]{P}.
\end{remark}

\end{document}